\newtheorem{theorem}{Theorem}
\newtheorem{lemma}{Lemma}
\newtheorem{remark}{Remark}
\newtheorem{definition}{Definition}
\journal{Elsevier}
\begin{document}

\begin{frontmatter}

\title{A new and simple condition for the global asymptotic stability\\ of a malware spread model on WSNs}


\author[Manh Tuan Hoang]{Manh Tuan Hoang\corref{mycorrespondingauthor}}
\cortext[mycorrespondingauthor]{Corresponding author}
\ead{tuanhm14@fe.edu.vn; hmtuan01121990@gmail.com}
\address[mymainaddress]{Department of Mathematics, FPT University, Hoa Lac Hi-Tech Park,\\ Km29 Thang Long Blvd, Hanoi, Viet Nam}
\begin{abstract}
In a very recent work [J. D. Hern\'andez Guill\'en, A. Mart\'in del Rey, A mathematical model for malware spread on WSNs with population dynamics, Physica A: Statistical Mechanics and its Applications 545(2020) 123609], a novel theoretical model for the spread of malicious code on wireless sensor networks was introduced and analyzed. However, the global asymptotic stability (GAS) of the disease-endemic equilibrium (DEE) point was only resolved partially under technical hypotheses that are not only difficult to be verified but also restrict the space of feasible parameters for the model. In the present work, we use a simple approach to establish the complete GAS of the DEE point without the technical hypotheses proposed in the benchmark work. This approach is  based on a suitable family of Lyapunov functions in combination with characteristics of Volterra-Lyapunov stable matrices. Consequently, we obtain a simple and easily-verified condition for the DEE point to be globally asymptotically stable.  This result provides an important improvement for the results constructed in the benchmark work. In addition, the theoretical findings are supported by numerical and illustrative examples, which show that the numerical results are consistent with the theoretical ones.
\end{abstract}
\begin{keyword}
Malware \sep Wireless sensor networks \sep  Global asymptotic stability \sep Lyapunov stability theory \sep Volterra-Lyapunov stable matrix \sep Dynamical systems
\MSC[2010] 		34C60, 	34D05, 337C75, 7N99 
\end{keyword}
\end{frontmatter}
\section{Introduction}
Nowadays, with the continuous development of the Internet, computer viruses and malware have become a big threat to Internet security and privacy as well as to work and daily life. This leads to urgent requests for effective measures to prevent and control computer viruses and malware. For this purpose, many mathematicians and engineers have built mathematical models, which are based on epidemiological models and the high similarity of the spread of computer viruses and biological ones, to discover characteristics and transmission mechanisms of computer viruses and malware (see, for example, \cite{Akgul, Dubey, HGuillen, Hosseini, Hu, Huang, Ren} and references therein). Consequently, effective measures to prevent and control computer viruses are suggested.\par
In a very recent work \cite{HGuillen}, Hern\'andez Guill\'en and Mart\'in del Rey introduced and analyzed 
a new theoretical model for the spread of malicious code on wireless sensor networks (WSNs). More clearly, a novel mathematical model considering population dynamics, vaccination and reinfection processes was constructed and studied.  In this model, the total population of nodes is classified into the four compartments, that are susceptible devices ($S$), infectious devices ($I$), carrier devices ($C$), and recovered devices ($R$), where
\begin{itemize}
\item susceptible nodes are those that are free of malware;
\item infectious nodes are those that have been reached by the malicious code and it can perform its malicious action;
\item in the case when malware is no able to cause some damage to the host node but the node serves as a transmission vector, the state of the node is that of carrier;
\item  recovered are those (infectious or carrier) devices in which the malicious code has been removed by means of security countermeasures.
\end{itemize}
Under some appropriate assumptions, the following dynamical differential model was formulated
\begin{equation}\label{eq:1a}
\begin{split}
&S'(t) = A + \epsilon R(t) - aI(t)S(t) - v S(t) - \mu S(t),\\
&C'(t) = a(1 - \delta)I(t)S(t) - b_CC(t) - \mu C(t),\\ 
&I'(t) = a\delta I(t)S(t) - b_I I(t) - \mu I(t),\\
&R'(t) = b_C C(t) + b_I I(t) + v S(t) - \epsilon R(t) - \mu R(t),\\
&N'(t) = A - \mu N(t),  
\end{split}
\end{equation}
where $S(t), C(t), A(t), R(t)$ and $N(t)$ stand for the number of $S$, $C$, $A$ and $R$ nodes and the total population at step of time $t$, respectively.\par
The last equation of the model \eqref{eq:1a} implies that the dynamics of \eqref{eq:1a} is qualitatively equivalent to the dynamics of the following limit system (see \cite{HGuillen})
\begin{equation}\label{eq:1b}
\begin{split}
&S'(t) =  A + \epsilon \bigg(\dfrac{A}{\mu} - S(t) - C(t) - I(t)\bigg) - aI(t)S(t) - v S(t) - \mu S(t),\\
&C'(t) = a(1 - \delta)I(t)S(t) - b_CC(t) - \mu C(t),\\ 
&I'(t) = a\delta I(t)S(t) - b_I I(t) - \mu I(t).
\end{split}
\end{equation}
In \cite{HGuillen}, a  feasible set  and the basic reproduction number for the model \eqref{eq:1a} were computed as
\begin{equation}\label{eq:1bc}
\Omega = \big\{(S, C, I) \in \mathbb{R}_+^3|\,\, 0 \leq S + C + I \leq A/\mu\big\}
\end{equation}
and
\begin{equation}\label{eq:1c}
\mathcal{R}_0 = \dfrac{a\delta(A + N_* \epsilon)}{(b_I + \mu)(v + \epsilon + \mu)}, \quad N_* := \dfrac{A}{\mu},
\end{equation}
respectively. It was proved that:
\begin{itemize}
\item The model \eqref{eq:1b} always possesses a disease-free equilibrium (DFE) point $E_0 = (S_0, C_0, I_0)$ for all values of the parameters, where
\begin{equation}\label{eq:1c}
S_0 = \dfrac{A + N_* \epsilon}{v + \epsilon + \mu}, \quad C_0 = 0, \quad I_0 = 0.
\end{equation}
\item A disease-endemic equilibrium point (DEE) point $E^* = (S^*, C^*, I^*)$ exists if and only if $\mathcal{R}_0 > 1$, where $E^*$ is given by
\begin{equation}\label{eq:1d}
\begin{split}
&S^* = \dfrac{b_I + \mu}{a\delta},\\
&C^* = \dfrac{(-1 + \delta)(b_I + \mu)\big[-a\delta(A + N^*\epsilon) + b_I(v + \epsilon + \mu) + \mu(v + \epsilon + \mu)\big]}{a\delta\big[\mu(\epsilon + \mu) + b_I(\epsilon - \delta\epsilon + \mu) + b_C(b_I + \delta\epsilon + \mu)\big]},\\
&I^* = -\dfrac{(b_C + \mu)\big[-a\delta(A + N^*\epsilon) + b_I(v + \epsilon + \mu) + \mu(v + \epsilon + \mu)\big]}{a\big[\mu(\epsilon + \mu) + b_I(\epsilon - \delta\epsilon + \mu) + b_C(b_I + \delta\epsilon + \mu)\big]}.
\end{split}
\end{equation}
\end{itemize}
Importantly, the global asymptotic stability (GAS) of the DFE was fully determined based on the Lyapunov stability theory (see \cite[Theorem 3]{HGuillen}), whereas, the GAS of the DEE point was partially established by using the geometric approach (see \cite[Theorem 4]{HGuillen}). It was shown that
\begin{itemize}
\item The disease-free steady state, $E_0$, is globally asymptotically stable with respect to $\Omega$ if $\mathcal{R}_0 \leq 1$.
\item The endemic steady state, $E^*$, is globally asymptotically stable if $\mathcal{R}_0 > 1$ and the following inequalities hold:
\begin{equation}\label{eq:2a}
b_I + aN_* - v - ac - b_C - \mu - \delta ac < 0,
\end{equation}
\begin{equation}\label{eq:2b}
-\mu - b_C + \epsilon + aN_*\delta < 0.
\end{equation}
\end{itemize}
It is well-known that studying the GAS of dynamical systems governed by differential equations has played an important role in both theory and applications \cite{Allen, Brauer, Khalil, LaSalle, Lyapunov}. On the other hand, the model \eqref{eq:1a} provides many useful applications in information technology and computer science \cite{HGuillen}, this was also mentioned in recent works \cite{Liu1, Liu2}. Therefore, the study of GAS of the model \eqref{eq:1a} is an essential problem with many practical significance. Although the condition \eqref{eq:2b} is easily-verified, it is not a trivial task to verify the condition \eqref{eq:2a} since the parameter $c$ depends implicitly on the parameters of the model \eqref{eq:1a} (see the proof of Theorem 4 in \cite{HGuillen}). Moreover, the conditions \eqref{eq:2a} and \eqref{eq:2b}  restrict the set of feasible parameters for the model \eqref{eq:1a}.\par
Motivated and inspired by the above reasons, in this work we introduce a simple approach to establish the GAS of the DEE point $E^*$ without the conditions \eqref{eq:2a} and \eqref{eq:2b}. The approach is based on a suitable family of Lyapunov functions in combination with properties of Volterra-Lyapunov stable matrices. Although the incorporation of the theory of Volterra–Lyapunov stable matrices into the classical method of Lyapunov functions was first introduced by Liao and Wang to study the GAS of some three- and four-dimensional epidemiological models (see \cite{Liao}), the results in \cite{Liao} required that the resulting matrices $W$ associated with the Volterra–Lyapunov stable matrices $A$ must be constant. However, it is not easy to verify this hypothesis in general. In the present work, by proposing nonstandard techniques and a suitable family of Lyapunov functions and utilizing characteristics of Volterra-Lyapunov stable matrices, we obtain a new and simple condition for the DEE point $E^*$ to be globally asymptotically stable. This condition is not only simpler, but also easier to be verified than the conditions \eqref{eq:2a} and \eqref{eq:2b}. The obtained results provide an important improvement for the results constructed in \cite{HGuillen}. Furthermore, differently from the previous work \cite{Liao}, we use a necessary and sufficient condition for matrices of order 3 to be Volterra-Lyapunov stable instead of a sufficient condition as in \cite{Liao}. As a result, complicated algebraic manipulations are avoided. It should be emphasized that the present approach can be applied to investigate asymptotic stability properties of the model \eqref{eq:1a} in the context of fractional derivatives.\par
The plan of this work is as follows:\\
In Section \ref{sec2}, we provide some concepts and preliminaries. The GAS of the DEE point $E^*$ is studied in Section \ref{sec3}. Section \ref{sec4} reports some numerical simulations. The last section presents some conclusions and open problems.
\section{Preliminaries}\label{sec2}
In this section, we recall from \cite{Cross} some well-known results related to Volterra-Lyapunov stable matrices.\par
Let $A = (a_{ij})$ be a real matrix of order $n$ and $D = diag(d_1, d_2, \ldots, d_n)$ be a real diagonal matrix. We write $A > 0$ when $A$ is a symmetric positive definite and $A < 0$ when $A$ is a symmetric negative definite.
\begin{definition}(\cite[Definition 1]{Cross})
A matrix $A$ is said to be Volterra-Lyapunov stable if there exists a $D > 0$ for which $AD + 
DA^T < 0$. 
\end{definition}
For any subset $1 \leq i_1 < i_2 < \ldots < i_j \leq n$ of the integers $1, 2, \ldots, n$ the
principal submatrix $A_{i_1\ldots i_j}$ of $A$ is obtained by omitting all rows and columns
except those with indices $i_1, i_2, \ldots, i_j$. The corresponding principal minor is
$M_{i_1\ldots i_j} = \det A_{i_1\ldots i_j}$. The minors $M_i$ are written simply $a_{ii}$.
\begin{definition}(\cite[Definition 2]{Cross})
The signed principal minors of $A$ are the quantities $(-1)^{j}M_{i_1\ldots i_j}$.
\end{definition}
\begin{definition}(\cite[Definition 3]{Cross})
Let $P$ denote the class of matrices whose signed principal
minors are all positive and $P_0^+$ the class whose signed principal minors are all
non-negative, with at least one of each order positive.
\end{definition}
\begin{theorem}\label{theoremVL}(\cite[Theorem 4]{Cross})
A $3 \times 3$ real matrix $A$ is Voltera-Lyapunov stable if and only if $A \in P$  and the inequalities
\begin{equation}\label{eq:3a}
p_1(y) = (a_{13}y + a_{31})^2 - 4a_{11}a_{33}y < 0,
\end{equation}
\begin{equation}\label{eq:3b}
p_2(y) = (b_1y + b_2)^2 - 4M_{12}M_{23}y < 0,
\end{equation}
where $b_1 = a_{12}a_{23} - a_{22}a_{13}$ and $b_2 = a_{21}a_{32} - a_{22}a_{31}$, are satisfied simultaneously.
\end{theorem}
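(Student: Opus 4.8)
The plan is to work directly from the definition: the matrix $A$ is Volterra-Lyapunov stable exactly when there is a positive diagonal matrix $D=\operatorname{diag}(d_1,d_2,d_3)$ making the symmetric matrix $M:=AD+DA^T$ negative definite. Because negative definiteness is read off from principal minors, I would recast ``existence of such a $D$'' as a feasibility problem in the unknowns $d_1,d_2,d_3>0$ and then use the scaling invariance (replacing $D$ by $cD$, $c>0$, only scales $M$) to remove one degree of freedom and express everything through the single ratio $y$ occurring in $p_1$ and $p_2$. Concretely I would record $M_{ii}=2a_{ii}d_i$ and $M_{ij}=a_{ij}d_j+a_{ji}d_i$, and translate negative definiteness of $M$ into Sylvester-type sign conditions on its principal minors.

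For the necessity of $A\in P$, I would observe that if $M=AD+DA^T<0$ then every principal submatrix of $M$ is again negative definite, and the submatrix indexed by a set $S$ equals $A_SD_S+D_SA_S^T$, with $A_S,D_S$ the corresponding principal submatrices. Hence each $A_S$ is itself Volterra-Lyapunov stable; congruence by $D_S^{-1}$ turns this into $A_S^TD_S^{-1}+D_S^{-1}A_S<0$, so $A_S$ is Hurwitz, and a Hurwitz matrix of size $|S|$ has determinant of sign $(-1)^{|S|}$. Thus $(-1)^{|S|}\det A_S>0$ for every index set $S$, which is precisely $A\in P$. Taking $y=d_3/d_1$ for a witnessing $D$, positivity of the $\{1,3\}$-minor of $M$ and negativity of $\det M$ then force $p_1(y)<0$ and $p_2(y)<0$, so the two inequalities are necessary as well.

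For sufficiency I would assume $A\in P$---so $a_{ii}<0$, every $2\times2$ principal minor $M_{ij}$ is positive, and $\det A<0$---and build an admissible $D$. Reordering the indices as $(1,3,2)$, the first Sylvester condition $-M_{11}>0$ is free from $a_{11}<0$; the leading $2\times2$ condition is $4a_{11}a_{33}d_1d_3-(a_{13}d_3+a_{31}d_1)^2>0$, which upon dividing by $d_1^2$ and setting $y=d_3/d_1$ is exactly $p_1(y)<0$. There remains the full determinant condition $\det M<0$, to be secured by the still-free entry $d_2>0$. Expanding $\det M$ exhibits it as a quadratic in $d_2$ with $y$-dependent coefficients, and the requirement that this quadratic be negative for some $d_2>0$ reduces---through a discriminant computation that uses $M_{12}>0$ and $M_{23}>0$---to $p_2(y)<0$. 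It then remains to verify that the $y$-ranges singled out by $p_1(y)<0$ and $p_2(y)<0$ intersect, so that a common admissible $y$, and hence a triple $(d_1,d_2,d_3)$, can be chosen.

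The main obstacle is this last reduction: proving that ``$\det M<0$ for some $d_2>0$'' is equivalent to ``$p_2(y)<0$''. This requires careful tracking of the signs of the coefficients of the quadratic in $d_2$---drawing on the full force of $A\in P$---together with the recognition that $p_2$, assembled from the cofactor-type quantities $b_1,b_2$ and the principal minors $M_{12},M_{23}$, is exactly the discriminant governing that quadratic. Establishing that the two admissible $y$-intervals genuinely overlap, rather than being only separately nonempty, is the secondary delicate point needed to close the equivalence.
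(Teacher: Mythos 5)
Be aware, first, that the paper offers no proof of this statement: it is imported verbatim, with attribution, from Cross's paper and is only \emph{applied} there (in Lemmas \ref{mainlemma1} and \ref{mainlemma2}). So your attempt can only be measured against the standard argument, and your skeleton essentially reproduces it. Your necessity argument for $A\in P$ is correct and complete. Your ``main obstacle'' also resolves favourably: taking the Schur complement of the $\{1,3\}$ block, $\det M=\det M_{\{1,3\}}\bigl(2a_{22}d_2-w^TM_{\{1,3\}}^{-1}w\bigr)$ with $w=d_2u+v$, $u=(a_{12},a_{32})^T$, $v=(a_{21},a_{23}y)^T$, so that $\det M<0$ is equivalent to $q(d_2):=(u^TGu)d_2^2+2(a_{22}+u^TGv)d_2+v^TGv<0$, where $G:=-M_{\{1,3\}}^{-1}>0$ and $\det G=1/|p_1(y)|$. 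The Lagrange identity $(u^TGu)(v^TGv)-(u^TGv)^2=\det G\,(a_{12}a_{23}y-a_{21}a_{32})^2$ collapses the discriminant to $\mathrm{disc}(q)/4=a_{22}^2+2a_{22}\,u^TGv-\det G\,(a_{12}a_{23}y-a_{21}a_{32})^2=-p_2(y)/|p_1(y)|$, which is exactly the identity you hoped for; moreover the vertex of $q$ lies at positive $d_2$ because $|p_1(y)|\,(a_{22}+u^TGv)=2y\bigl(a_{33}M_{12}+a_{11}M_{23}\bigr)+(a_{13}y+a_{31})(b_1y+b_2)$, which is negative by AM--GM once $A\in P$, $p_1(y)<0$ and $p_2(y)<0$ all hold (the degenerate case $u=0$ leaves $q$ linear with slope $2a_{22}<0$). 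With these two facts both directions close along the lines you describe, including the necessity of $p_2(y)<0$ at $y=d_3/d_1$.

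The genuine flaw is your closing step. In the sufficiency direction, the existence of a \emph{common} $y$ with $p_1(y)<0$ and $p_2(y)<0$ is the hypothesis---that is what ``satisfied simultaneously'' means---so there is nothing left to verify: you simply run your construction at that $y$. What you propose to prove instead, that the two $y$-intervals ``genuinely overlap'' given that each is nonempty, is not provable, because it is false for general $A\in P$: membership in $P$ already makes each interval separately nonempty (for nondegenerate leading coefficients the discriminant of $p_1$ is $16a_{11}a_{33}M_{13}>0$, and that of $p_2$ is $16M_{12}M_{23}\,a_{22}\det A>0$ by Jacobi's identity $M_{12}M_{23}-b_1b_2=a_{22}\det A$), yet the intervals can be disjoint. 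For instance, $A=\begin{pmatrix}-1&-10&10\\ 10&-1&1\\ 10^{-2}&-10&-1\end{pmatrix}$ lies in $P$ ($M_{12}=101$, $M_{13}=0.9$, $M_{23}=11$, $\det A=-1111$), has $b_1=0$, $b_2=-99.99$, and $p_1(y)<0$ only on a subinterval of $(0,0.04)$ while $p_2(y)<0$ only for $y>2.24$; by the theorem itself such an $A$ is not Volterra--Lyapunov stable even though each inequality is separately solvable. So that ``secondary delicate point'' must be deleted, not filled in: the overlap is the irreducible content of the hypothesis, and checking it for a concrete matrix is precisely what the paper's Lemma \ref{mainlemma2} does when it verifies $\Omega_1\cap\Omega_2\neq\emptyset$ for $Q$.
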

\section{Main results}\label{sec3}
In this section, the GAS of the DEE point $E^*$ is studied based on suitable Lyapunov functions and properties of Volterra-Lyapunov stable matrices. Throughout this section, we always assume that the basic reproduction number $\mathcal{R}_0$ is greater than $1$.\par
Since the total population of \eqref{eq:1a} satisfies $N' = A - \mu N$, it is sufficient to consider the following sub-model
\begin{equation}\label{eq:1}
\begin{split}
S' &= A + \epsilon R - aIS - v S - \mu S,\\
I' &= a\delta IS - b_I I - \mu I,\\
R' &= b_C \bigg(\dfrac{A}{\mu} - S - I - R\bigg) + b_I I + v S - \epsilon R - \mu R
\end{split}
\end{equation}
on its positively invariant set $\Omega$ given by \eqref{eq:1bc}. Note that the representation $C = A/\mu - S - I - R$ was used in \eqref{eq:1a} to obtain \eqref{eq:1}. \par
For the model \eqref{eq:1}, let us denote
\begin{equation*}
M(t) := \delta S(t) + I(t), \quad t \geq 0.
\end{equation*}
Then, we obtain an equation for $M$
\begin{equation*}
M' = \delta S' + I' = \delta A + \delta \epsilon R - \delta v S - \delta \mu S - b_I I - \mu I = \delta A + \delta \epsilon R - v(M - I) - \mu M - b_I I.
\end{equation*}
This equation leads to a new model that is equivalent to \eqref{eq:1}
\begin{equation}\label{eq:2}
\begin{split}
&M' = \delta A + \delta \epsilon R - (v + \mu)M + (v - b_I)I,\\
&I' = a(M - I)I - b_I I - \mu I,\\
&R' = b_C \bigg(\dfrac{A}{\mu} - \dfrac{M - I}{\delta} - I - R\bigg) + b_I I + v \dfrac{M - I}{\delta}  - \epsilon R - \mu R.
\end{split}
\end{equation}
A feasible set for the model \eqref{eq:2} can be given by
\begin{equation}\label{eq:3}
\Omega^* = \big\{(M, I, R) \in \mathbb{R}_+^3| 0 \leq M \leq A/\mu,\, 0 \leq I + R \leq A/\mu\big\}.
\end{equation}
Form \eqref{eq:1d}, we deduce that if $\mathcal{R}_0 > 1$ the model \eqref{eq:2} has a unique positive equilibrium point $E_2^{*} = (M_2^{*}, \, I_2^{*},\, R_2^{*})$ given by
\begin{equation}\label{eq:4}
\begin{split}
&M_2^{*} = \dfrac{b_I + \mu}{a} -  \dfrac{(b_C + \mu)\big[-a\delta(A + N^*\epsilon) + b_I(v + \epsilon + \mu) + \mu(v + \epsilon + \mu)\big]}{a\big[\mu(\epsilon + \mu) + b_I(\epsilon - \delta\epsilon + \mu) + b_C(b_I + \delta\epsilon + \mu)\big]},\\
&I_2^{*} = -\dfrac{(b_C + \mu)\big[-a\delta(A + N^*\epsilon) + b_I(v + \epsilon + \mu) + \mu(v + \epsilon + \mu)\big]}{a\big[\mu(\epsilon + \mu) + b_I(\epsilon - \delta\epsilon + \mu) + b_C(b_I + \delta\epsilon + \mu)\big]},\\
&R_2^{*} = \dfrac{A}{\mu} - S^* - I^* - C^*.
\end{split}
\end{equation}
Now, the GAS of $E^*$ for the model \eqref{eq:1a} is equivalent to the GAS of $E_2^*$ for the model \eqref{eq:2}.
Because $E_2^*$ is the unique positive equilibrium point of the model \eqref{eq:2}, we have
\begin{equation}\label{eq:5a}
\begin{split}
&\delta A + \delta \epsilon R_2^* - (v + \mu)M_2^* + (v - b_I)I_2^* = 0,\\
& aM_2^* - aI_2^* = b_I + \mu,\\
& b_C\dfrac{A}{\mu} + \dfrac{v - b_C}{\delta}M_2^* + \bigg(\dfrac{b_C}{\delta} - b_C + b_I - \dfrac{v}{\delta}\bigg)I_2^* - (b_C + \epsilon + \mu)R_2^* = 0.
\end{split}
\end{equation} 
By using \eqref{eq:5a}, the model \eqref{eq:2} can be rewritten in the form
\begin{equation}\label{eq:5}
\begin{split}
M' &= \Big[\delta A + \delta \epsilon R - (v + \mu)M + (v - b_I)I\Big] - \Big[\delta A + \delta \epsilon R_2^* - (v + \mu)M_2^* + (v - b_I)I_2^*\Big]\\
&= -(v + \mu)(M - M_2^*) + (v - b_I)(I - I_2^*) + \delta \epsilon (R - R_2^*),\\
I'&= I\big[(aM - aI) - (b_I + \mu)\big] =  I\big[(aM - aI) - (aM_2^* - aI_2^*)\big],\\
&= I\Big[a(M - M_2^*)  - a(I - I_2^*)\Big],\\
R'&= \bigg[ b_C\dfrac{A}{\mu} + \dfrac{v - b_C}{\delta}M + \bigg(\dfrac{b_C}{\delta} - b_C + b_I - \dfrac{v}{\delta}\bigg)I - (b_C + \epsilon + \mu)R \bigg]\\
&- \bigg[ b_C\dfrac{A}{\mu} + \dfrac{v - b_C}{\delta}M_2^* + \bigg(\dfrac{b_C}{\delta} - b_C + b_I - \dfrac{v}{\delta}\bigg)I_2^* - (b_C + \epsilon + \mu)R_2^* \bigg]\\
&= \dfrac{v - b_C}{\delta}(M - M_2^*) + \bigg(\dfrac{b_C}{\delta} - b_C + b_I - \dfrac{v}{\delta}\bigg)(I- I_2^*) - (b_C + \epsilon + \mu)(R - R_2^*).
\end{split}
\end{equation}
Consider a family of Lyapunov functions defined by
\begin{equation}\label{eq:6}
V(M, I, R) = \tau_1\big(R - R_2^*\big)^2 + \tau_2 \big(M - M_2^*\big)^2 + 2\tau_3 \bigg(I - I_2^* - I_2^*\ln\dfrac{I}{I_2^*}\bigg),
\end{equation}
where $\tau_1, \tau_2, \tau_3$ are positive undetermined coefficients. The derivative of the function $V$ along solutions of \eqref{eq:5} satisfies
\begin{equation}\label{eq:7}
\begin{split}
\dfrac{dV}{dt} &= \dfrac{d V}{d R}\dfrac{dR}{dt} + \dfrac{d V}{d M}\dfrac{dM}{dt} + \dfrac{d V}{d I}\dfrac{dI}{dt}\\
&=  2\tau_1 \big(R - R_2^*\big)R' + 2\tau_2 \big(M - M_2^*\big)M' + 2\tau_3 \dfrac{I - I_2^*}{I}I'\\
& =  2\tau_1\dfrac{v - b_C}{\delta}(M - M_2^*)(R - R_2^*) + 2\tau_1\bigg(\dfrac{b_C}{\delta} - b_C + b_I - \dfrac{v}{\delta}\bigg)(R - R_2^*)(I- I_2^*)- 2\tau_1(b_C + \epsilon + \mu)(R - R_2^*)^2\\
&-2\tau_2(v + \mu)(M - M_2^*)^2 + 2\tau_2\delta \epsilon(M - M_2^*)(R - R_2^*) + 2\tau_2(v - b_I)(M - M_2^*)(I - I_2^*)\\
&+ 2\tau_3a(I - I_2^*)(M - M_2^*) - 2\tau_3a(I - I_2^*)^2,
\end{split}
\end{equation}
or equivalently to
\begin{equation}\label{eq:8a}
\dfrac{dV}{dt} = X\big(QD + DQ^T)X^T,
\end{equation}
where $X, Q$ and $D$ are matrices given by
\begin{equation}\label{eq:8}
X = \big[R - R_2^* \quad M - M_2^* \quad I - I_2^*\big],\quad
Q = 
\begin{pmatrix}
-(b_C + \epsilon + \mu)&\delta \epsilon&0\\
&&\\
\dfrac{v - b_C}{\delta}&-(v + \mu)&a\\
&&\\
\dfrac{b_C}{\delta} - b_C + b_I - \dfrac{v}{\delta}&v - b_I&-a
\end{pmatrix},\quad
D = 
\begin{pmatrix}
\tau_3&0&0\\
0&\tau_1&0\\
0&0&\tau_2
\end{pmatrix}.
\end{equation}
Our main objective is to determine conditions guaranteeing that \textit{the matrix $Q$ is Volterra-Lyapunov stable.}
\begin{lemma}\label{mainlemma1}
The matrix $Q$ given by \eqref{eq:8} belongs to the class $P$.
\end{lemma}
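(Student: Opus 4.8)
The plan is to verify \emph{directly}, order by order, that every signed principal minor of $Q$ is strictly positive, which is exactly the defining condition for membership in the class $P$. Writing $Q = (a_{ij})$, the diagonal entries are $a_{11} = -(b_C+\epsilon+\mu)$, $a_{22} = -(v+\mu)$, $a_{33} = -a$, and the only vanishing off-diagonal entry is $a_{13} = 0$; I will exploit this last fact repeatedly to simplify the computations. The order-one signed minors are $-a_{ii}$, so the first step is immediate: since all model parameters are positive, $-a_{11} = b_C+\epsilon+\mu > 0$, $-a_{22} = v+\mu > 0$, and $-a_{33} = a > 0$.

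For the three order-two signed minors, which equal the $2\times 2$ principal minors $M_{12}, M_{13}, M_{23}$, I would compute each and show that it reduces to a sum of positive monomials in the parameters. The minor $M_{13} = a_{11}a_{33} = a(b_C+\epsilon+\mu)$ is positive at once because $a_{13}=0$. The minor $M_{23} = a_{22}a_{33} - a_{23}a_{32}$ collapses neatly to $a(b_I+\mu)$, while $M_{12} = a_{11}a_{22} - a_{12}a_{21}$, after the $\delta$ in $a_{21}=(v-b_C)/\delta$ cancels against $a_{12}=\delta\epsilon$, expands to $b_C v + b_C\mu + \epsilon b_C + \epsilon\mu + \mu v + \mu^2$. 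The point worth flagging here is that although individual entries such as $a_{21}$, $a_{32}=v-b_I$, and $a_{31}$ may have \emph{either} sign, the potentially troublesome cross terms (the $\epsilon v$ contributions) cancel in the expansion, leaving only positive terms.

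For the single order-three signed minor, I need $(-1)^3 M_{123} = -\det Q > 0$, i.e.\ $\det Q < 0$. I would expand $\det Q$ along the first row, where $a_{13}=0$ removes one term, giving $\det Q = a_{11}M_{23} - a_{12}(a_{21}a_{33} - a_{23}a_{31})$. The decisive simplification is that the $\delta$-dependent parts of $a_{21}a_{33}$ and $a_{23}a_{31}$ cancel exactly, so that $a_{21}a_{33} - a_{23}a_{31} = a(b_C - b_I)$; this reduces the determinant to
\begin{equation*}
\det Q = -a\big[(b_C+\epsilon+\mu)(b_I+\mu) + \delta\epsilon(b_C - b_I)\big].
\end{equation*}
The hard part — the only place where the argument is not purely formal — is confirming that the bracketed quantity is positive, and this is where I anticipate the main obstacle: the factor $b_C - b_I$ can be negative, so the product form is not manifestly signed. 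The resolution is to expand the bracket fully and collect the coefficient of $\epsilon b_I$, which turns out to be $1-\delta$; invoking the structural constraint $0 < \delta < 1$ (so that $\delta$ is a genuine fraction, as already required for $C^*>0$), the only negative-looking contribution $-\delta\epsilon b_I$ is absorbed into the nonnegative term $(1-\delta)\epsilon b_I$, and the bracket becomes a sum of nonnegative terms containing the strictly positive $\mu^2$. Hence $\det Q < 0$, every signed principal minor is positive, and $Q \in P$.
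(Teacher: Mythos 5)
Your proof is correct and follows essentially the same route as the paper: direct verification that every signed principal minor of $Q$ is positive, with the same key cancellation of the $\delta$-dependent terms in $\det Q$ and the same use of $0<\delta<1$ to absorb the $-\delta\epsilon b_I$ contribution. Incidentally, your expansion of $M_{12}$ is the correct one --- it factors as $(b_C+\mu)(v+\epsilon+\mu)$ --- whereas the paper's stated factorization $(b_C+\mu)(b_C+\mu+\epsilon)$ appears to be a typo; both quantities are positive, so the conclusion is unaffected.
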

\begin{proof}
We need to show that all signed principal minors of $Q$ are positive. Indeed,
\begin{equation}\label{eq:11}
\begin{split}
&M_1= -(b_C + \epsilon + \mu) < 0,\\
&M_2= -(v + \mu) < 0,\\
&M_3= -a < 0,\\
&M_{12}= (b_C + \epsilon + \mu)(v + \mu) - \dfrac{v - b_C}{\delta}\delta \epsilon = (b_C + \mu)(b_C + \mu + \epsilon) > 0,\\
&M_{13}= (b_C + \epsilon + \mu)a > 0,\\
&M_{23}= (v + \mu)a - (v - b_I)a =  a(b_I + \mu) > 0,\\
&M_{123} = \det Q = -a\Big[(b_I + \mu)(b_C + \epsilon + \mu) - \delta\epsilon(b_I - b_C)\Big] < 0.
\end{split}
\end{equation}
The last inequality of \eqref{eq:11} is obtained because $0 < \delta < 1$. Therefore, we have that all the signed principal minors of $Q$ are positive, or equivalently, $Q \in P$. The proof is complete.
\end{proof}
We now show that the inequalities \eqref{eq:3a} and \eqref{eq:3b} can be satisfied simultaneously for the matrix $Q$. For this purpose, we introduce the following hypothesis:
\begin{equation}\label{maincondition}
\begin{split}
&\dfrac{\Big(b_Ca\epsilon - b_Ca\delta\epsilon + b_Ia\delta\epsilon - {va\epsilon}\Big)^2}{4(b_C + \epsilon + \mu)(a + b_I + \mu)}\\
&< (b_C + \mu)(b_C + \mu + \epsilon)(b_I + \mu)a + (v + \mu)a\Big[(b_I + \mu)(b_C + \epsilon + \mu) - \delta\epsilon(b_I - b_C)\Big]\\
&+2\sqrt{(b_C + \mu)(b_C + \mu + \epsilon)(b_I + \mu)a(v + \mu)a\Big[(b_I + \mu)(b_C + \epsilon + \mu) - \delta\epsilon(b_I - b_C)\Big]}.
\end{split}
\end{equation}
%
\begin{lemma}\label{mainlemma2}
The inequalities \eqref{eq:3a} and \eqref{eq:3b} are satisfied simultaneously for the matrix $Q$ if the condition \eqref{maincondition} holds.
\end{lemma}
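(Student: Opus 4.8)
The plan is to invoke Theorem \ref{theoremVL}: since Lemma \ref{mainlemma1} already gives $Q\in P$, it remains only to exhibit a single $y>0$ at which both $p_1(y)<0$ and $p_2(y)<0$, and to show that \eqref{maincondition} guarantees such a $y$ exists. First I would record the data needed to instantiate \eqref{eq:3a}--\eqref{eq:3b} for $Q$: from \eqref{eq:8} we have $a_{13}=0$, $b_1=a_{12}a_{23}-a_{22}a_{13}=a\delta\epsilon$, and $b_2=a_{21}a_{32}-a_{22}a_{31}$, while the principal minors $M_{12}=(b_C+\mu)(b_C+\mu+\epsilon)$, $M_{23}=a(b_I+\mu)$ and $M_{13}=a_{11}a_{33}=a(b_C+\epsilon+\mu)$ are read off from \eqref{eq:11}.

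The key simplification is that $a_{13}=0$ collapses \eqref{eq:3a} to the affine inequality $p_1(y)=a_{31}^2-4a_{11}a_{33}\,y=a_{31}^2-4M_{13}\,y$, which is strictly decreasing in $y$ because $M_{13}>0$; hence \eqref{eq:3a} holds precisely on the ray $y>y_1:=a_{31}^2/(4M_{13})$. For \eqref{eq:3b}, $p_2$ is an upward-opening parabola in $y$ (its leading coefficient is $b_1^2>0$), so $p_2<0$ on an open interval between its two roots whenever the discriminant is positive. A convenient device is the substitution $u=\sqrt{y}$, under which $p_2$ factors as $p_2=(b_1u^2-2\sqrt{M_{12}M_{23}}\,u+b_2)(b_1u^2+2\sqrt{M_{12}M_{23}}\,u+b_2)$ and $p_1<0\iff u>|a_{31}|/(2\sqrt{M_{13}})$; this makes the endpoints of the admissible $u$-interval explicit in terms of $\sqrt{M_{12}M_{23}}$ and $\sqrt{M_{12}M_{23}-b_1b_2}$, and reduces the simultaneous solvability of \eqref{eq:3a}--\eqref{eq:3b} to a single inequality stating that the lower bound coming from $p_1$ lies below the upper end of the region where $p_2<0$.

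The reduction to the stated form rests on two algebraic identities that I would verify first. On the left, the numerator of \eqref{maincondition} is exactly $(a\delta\epsilon\,a_{31})^2=(b_1a_{31})^2$, so the whole left-hand side equals $(b_1a_{31})^2/[4(b_C+\epsilon+\mu)(a+b_I+\mu)]$. On the right, writing $K:=(b_I+\mu)(b_C+\epsilon+\mu)-\delta\epsilon(b_I-b_C)=-\det Q/a>0$ (positive by \eqref{eq:11} and $0<\delta<1$), the three summands are $M_{12}M_{23}$, $(v+\mu)aK$, and $2\sqrt{M_{12}M_{23}\,(v+\mu)aK}$, so the right-hand side is the perfect square $\bigl(\sqrt{M_{12}M_{23}}+\sqrt{(v+\mu)aK}\bigr)^2$. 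Recognizing this perfect-square structure is what turns the feasibility inequality into the clean threshold \eqref{maincondition}; along the way I would use the relation $M_{12}M_{23}-b_1b_2=(v+\mu)aK+a(b_I+\mu)(b_C+\mu)(b_C-v)$ to pass between the discriminant of $p_2$ and the quantity $(v+\mu)aK$ appearing on the right.

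I expect the main obstacle to be purely computational: carrying the overlap condition for the two $y$-regions through the square-root manipulations and confirming that it collapses \emph{exactly} to \eqref{maincondition}, rather than to a superficially similar expression. Two points need care. First, the squaring step is only reversible when both sides are nonnegative, so I would first dispose of the trivial case in which the left-hand side is already dominated, and otherwise justify squaring using the positivity of $M_{13}$, $M_{12}$, $M_{23}$ and $K$ guaranteed by \eqref{eq:11} together with $0<\delta<1$ and $\mathcal{R}_0>1$. Second, one must check that the admissible $y$ (equivalently $u$) is genuinely positive and that the relevant factor of $p_2$ has the correct sign there, so that $p_2(y)<0$ and not merely $b_1u^2-2\sqrt{M_{12}M_{23}}\,u+b_2<0$; this is where the sign of $b_2$ and the ordering of the interval endpoints must be tracked. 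Once these are settled, \eqref{maincondition} furnishes the required $y$, both inequalities of Theorem \ref{theoremVL} hold simultaneously, and the lemma follows.
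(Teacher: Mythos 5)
Your proposal follows the same route as the paper's own proof: invoke Theorem \ref{theoremVL} with Lemma \ref{mainlemma1}, use $q_{13}=0$ to see that \eqref{eq:3a} holds on a ray, locate the interval where $p_2<0$ through its two roots, and express nonemptiness of the overlap as a single threshold inequality to be identified with \eqref{maincondition}. Your $u=\sqrt{y}$ factorization and the perfect-square reading of the right-hand side of \eqref{maincondition} are cosmetic variants of the paper's computation of $\Omega_1$, $\Omega_2$ and \eqref{eq:82}.

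However, the step you yourself flag as the main obstacle --- confirming that the overlap condition collapses \emph{exactly} to \eqref{maincondition} --- is precisely where the argument fails, and the failure is genuine rather than cosmetic. From \eqref{eq:8} you correctly obtain $p_1(y)=a_{31}^2-4M_{13}y$ with $M_{13}=q_{11}q_{33}=a(b_C+\epsilon+\mu)$, so the overlap condition your derivation produces has left-hand side $(b_1a_{31})^2/\bigl[4a(b_C+\epsilon+\mu)\bigr]$ after multiplying through by $b_1^2$; but the left-hand side of \eqref{maincondition} is $(b_1a_{31})^2/\bigl[4(b_C+\epsilon+\mu)(a+b_I+\mu)\bigr]$, which is strictly smaller whenever $a_{31}\ne 0$ because $a+b_I+\mu>a$. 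Thus \eqref{maincondition} is strictly weaker than the condition your computation requires, and \eqref{maincondition} alone does not yield a common $y$: you would have proved sufficiency of a strictly stronger inequality, not the lemma as stated. (The paper reaches \eqref{maincondition} only because its instantiation \eqref{eq:13} writes the coefficient of $y$ in $p_1$ as $4(b_C+\epsilon+\mu)(a+b_I+\mu)$ rather than $4q_{11}q_{33}$; your formula and the paper's are incompatible, and \eqref{maincondition} is tailored to the paper's.) A second symptom of the same inconsistency is your claimed identity $M_{12}M_{23}-b_1b_2=(v+\mu)aK+a(b_I+\mu)(b_C+\mu)(b_C-v)$: for every $3\times 3$ matrix one has $M_{12}M_{23}-b_1b_2=a_{22}\det A$ exactly (this is what produces the quadratic form of $p_2$ quoted in the paper), so the spurious extra term arises only because the value $M_{12}=(b_C+\mu)(b_C+\mu+\epsilon)$ you copied from \eqref{eq:11} is not the principal minor of the matrix in \eqref{eq:8}; direct computation gives $q_{11}q_{22}-q_{12}q_{21}=(b_C+\epsilon+\mu)(v+\mu)-\epsilon(v-b_C)=(b_C+\mu)(v+\epsilon+\mu)$. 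Until you fix one consistent set of values for $M_{12}$, $M_{13}$ and $\det Q$ computed from \eqref{eq:8}, the endpoints of your $u$-interval and your $p_1$-threshold refer to different matrices, and the final inequality you reach is neither \eqref{maincondition} nor equivalent to it.
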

\begin{proof}
For the matrix $Q$, we have
\begin{equation}\label{eq:13}
p_1(y) = \bigg(\dfrac{b_C}{\delta} - b_C + b_I - \dfrac{v}{\delta}\bigg)^2 - 4(b_C + \epsilon + \mu)(a + b_I + \mu)y.
\end{equation}
So, $p_1(y) < 0$ whenever
\begin{equation}\label{eq:14}
y \in \Omega_1 := (y_{p_1}^*, \infty), \quad y_{p_1}^* := \dfrac{\bigg(\dfrac{b_C}{\delta} - b_C + b_I - \dfrac{v}{\delta}\bigg)^2}{4(b_C + \epsilon + \mu)(a + b_I + \mu)}.
\end{equation}
On the other hand, the polynomial $p_2(y)$ is given by \cite{Cross}
\begin{equation*}
p_2(y) = b_1^2 y^2 - 2\big(M_{12}M_{23} + a_{22}\det Q\big)y + b_2^2,
\end{equation*}
where $b_1 = q_{12}q_{23} - q_{22}q_{13}$ and $b_2 = q_{21}q_{32} - q_{22}q_{31}$. So, its discriminant is
\begin{equation*}
\Delta = 16a_{22}M_{12}M_{23}\det Q.
\end{equation*}
It follows from \eqref{eq:11} that $\Delta > 0$. Consequently, $p_2(y) > 0$ whenever
\begin{equation*}
y \in \Omega_2 = \bigg(\dfrac{M_{12}M_{23} + q_{22}\det Q - 2\sqrt{M_{12}M_{23}q_{22}\det Q}}{b_1^2},\,\,\,\dfrac{M_{12}M_{23} + q_{22}\det Q + 2\sqrt{M_{12}M_{23}q_{22}\det Q}}{b_1^2}\bigg).
\end{equation*}
The inequalities \eqref{eq:3a} and \eqref{eq:3b} are satisfied simultaneously for the matrix $Q$ if and only if $\Omega_1 \cap \Omega_2 \ne \emptyset$. This condition is equivalent to
\begin{equation*}\label{eq:81}
y_{p_1}^* < \dfrac{M_{12}M_{23} + q_{22}\det Q + 2\sqrt{M_{12}M_{23}q_{22}\det Q}}{b_1^2},
\end{equation*}
or equivalently,
\begin{equation}\label{eq:82}
y_{p_1}^* := \dfrac{\bigg(\dfrac{b_C}{\delta} - b_C + b_I - \dfrac{v}{\delta}\bigg)^2}{4(b_C + \epsilon + \mu)(a + b_I + \mu)} < \dfrac{M_{12}M_{23} + q_{22}\det Q + 2\sqrt{M_{12}M_{23}q_{22}\det Q}}{(a\delta\epsilon)^2}.
\end{equation}
The condition \eqref{eq:82} is equivalent to \eqref{maincondition}. This completes the proof.
\end{proof}
Combing Lemmas \ref{mainlemma1} and \ref{mainlemma2} with Theorem \ref{theoremVL} we obtain:
\begin{theorem}\label{maintheorem1}
The matrix $Q$ given by \eqref{eq:8} is Volterra-Lyapunov stable whenever the condition \eqref{maincondition} hold.
\end{theorem}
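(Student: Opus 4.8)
The plan is to read Theorem \ref{theoremVL} as a two-item checklist: for a $3\times 3$ real matrix, Volterra--Lyapunov stability is equivalent to the conjunction of (i) membership in the class $P$, and (ii) the simultaneous satisfaction of the two polynomial inequalities \eqref{eq:3a} and \eqref{eq:3b}. Since only the sufficiency direction of this equivalence is needed, it suffices to certify that $Q$ meets both items, after which the stability conclusion follows at once.

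First I would invoke Lemma \ref{mainlemma1}, which already discharges item (i) unconditionally: every signed principal minor of $Q$ is positive, so $Q \in P$ for all admissible parameters. No new computation is required here, as the sign analysis was carried out in \eqref{eq:11}, where the decisive point was that $0 < \delta < 1$ forces $\det Q < 0$. Next I would invoke Lemma \ref{mainlemma2} to secure item (ii): that lemma shows that the admissible sets $\Omega_1$ (where \eqref{eq:3a} holds) and $\Omega_2$ (where \eqref{eq:3b} holds) intersect precisely when the threshold inequality \eqref{maincondition} is in force, so under hypothesis \eqref{maincondition} there exists a common $y$ making both $p_1(y) < 0$ and $p_2(y) < 0$.

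With items (i) and (ii) both in hand, the sufficiency half of Theorem \ref{theoremVL} applies verbatim and yields the Volterra--Lyapunov stability of $Q$, completing the argument. Because the present theorem is purely an assembly of the two preceding lemmas with the cited characterization, I do not anticipate any genuine obstacle at this stage. The only subtlety is conceptual rather than computational: one must ensure that \eqref{eq:3a} and \eqref{eq:3b} hold \emph{for the same} value of $y$, not merely that each is individually satisfiable, and this is exactly why Lemma \ref{mainlemma2} is phrased through the nonemptiness of $\Omega_1 \cap \Omega_2$. All the substantive difficulty has already been absorbed into Lemma \ref{mainlemma2}, whose proof reduces the intersection condition to the single explicit inequality \eqref{maincondition}.
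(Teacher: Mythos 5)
Your proposal is correct and matches the paper's own argument exactly: the paper proves Theorem \ref{maintheorem1} precisely by combining Lemma \ref{mainlemma1} (which gives $Q \in P$) and Lemma \ref{mainlemma2} (which gives the simultaneous satisfaction of \eqref{eq:3a} and \eqref{eq:3b} under \eqref{maincondition}) with the characterization in Theorem \ref{theoremVL}. Your added remark that the two inequalities must hold for a common $y$ --- the reason Lemma \ref{mainlemma2} is stated via $\Omega_1 \cap \Omega_2 \neq \emptyset$ --- is exactly the right reading of why the lemma is phrased as it is.
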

As an important consequence of Theorem \ref{maintheorem1}. The GAS of $E_2^*$ of the model \eqref{eq:2} is obtained.
\begin{theorem}\label{maintheorem2}
The DEE point $E_2^*$ of the model \eqref{eq:2} is globally asymptotically stable if the condition \eqref{maincondition} is satisfied.
\end{theorem}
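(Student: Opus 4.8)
The plan is to turn the Volterra-Lyapunov stability of $Q$ furnished by Theorem \ref{maintheorem1} directly into the negative definiteness of $\dot V$ through an appropriate choice of the free coefficients in \eqref{eq:6}. First I would invoke Theorem \ref{maintheorem1}: under condition \eqref{maincondition} the matrix $Q$ is Volterra-Lyapunov stable, so by the definition of a Volterra-Lyapunov stable matrix there exists a positive diagonal matrix $D = \operatorname{diag}(d_1, d_2, d_3)$ with $QD + DQ^T < 0$. Matching this against the structure of $D$ in \eqref{eq:8}, I would set $\tau_3 = d_1$, $\tau_1 = d_2$, $\tau_2 = d_3$; all three are strictly positive, so $V$ in \eqref{eq:6} is a legitimate member of the proposed family.

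Next I would verify that this $V$ is positive definite with respect to $E_2^*$. The two quadratic terms $\tau_1(R - R_2^*)^2$ and $\tau_2(M - M_2^*)^2$ are manifestly nonnegative and vanish precisely when $R = R_2^*$ and $M = M_2^*$. For the logarithmic term I would use the elementary inequality $g(x) := x - 1 - \ln x \geq 0$ for $x > 0$, with equality only at $x = 1$; writing $I - I_2^* - I_2^*\ln(I/I_2^*) = I_2^* g(I/I_2^*)$ exhibits this term as nonnegative and zero only at $I = I_2^*$. Hence $V > 0$ away from $E_2^*$ while $V(E_2^*) = 0$.

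The decisive step is the sign of the derivative. By \eqref{eq:8a}, along solutions of \eqref{eq:5} one has $\dot V = X(QD + DQ^T)X^T$ with $X = [\,R - R_2^*\ \ M - M_2^*\ \ I - I_2^*\,]$, and since $QD + DQ^T$ is symmetric negative definite we obtain $\dot V < 0$ for every $X \neq 0$ and $\dot V = 0$ only when $X = 0$, that is, exactly at $E_2^*$. With $V$ positive definite and $\dot V$ negative definite on the compact positively invariant set $\Omega^*$ of \eqref{eq:3} (whose invariance for \eqref{eq:2} is inherited, through the change of variables $M = \delta S + I$, from the invariance of $\Omega$ for \eqref{eq:1}), I would conclude by the classical Lyapunov global asymptotic stability theorem that $E_2^*$ is globally asymptotically stable; in view of the equivalence of \eqref{eq:2} with \eqref{eq:1} noted earlier, the GAS of $E^*$ for the original model \eqref{eq:1a} follows as well.

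I expect the main subtlety to be the domain of the Lyapunov function rather than any computation: the term $\ln(I/I_2^*)$ forces us to work on the interior $\{I > 0\}$, so I would first record that this interior is positively invariant (since $I = 0$ is itself invariant, trajectories started with $I > 0$ retain $I > 0$) and then state the conclusion as GAS with respect to the biologically meaningful region. A secondary point worth making explicit is that the definition of Volterra-Lyapunov stability only asserts the \emph{existence} of some admissible positive diagonal $D$; it is precisely the diagonal form demanded by \eqref{eq:8} that permits this $D$ to be absorbed into the undetermined coefficients $\tau_1, \tau_2, \tau_3$, so no independent argument that the ``right'' $D$ exists is required.
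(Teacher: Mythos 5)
Your proposal is correct and takes essentially the same route as the paper's own proof: invoke Theorem \ref{maintheorem1} to obtain a positive diagonal matrix $D$ with $QD + DQ^T < 0$, absorb its entries into the undetermined coefficients $\tau_1, \tau_2, \tau_3$ of $V$ in \eqref{eq:6}, and conclude via \eqref{eq:8a} and the classical Lyapunov stability theorem. Your write-up is in fact more careful than the paper's three-line proof --- in particular the explicit positive-definiteness check of $V$ via $x - 1 - \ln x \ge 0$ and the observation that the argument lives on the positively invariant region $\{I > 0\}$ --- but these are refinements of the same argument, not a different method.
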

\begin{proof}
First, we recall from \eqref{eq:8a} that
\begin{equation*}
\dfrac{dV}{dt} = X(QD + DQ^T)X.
\end{equation*}
Since the matrix $Q$ is Volterra-Lyapunov stable, there exists a diagonal matrix $D > 0$ for which $QD + DQ^T < 0$. Consequently, there always exist positive real numbers $\tau_1, \tau_2$ and $\tau_3$ such that the function $V$ defined by \eqref{eq:6} satisfies the Lyapunov stability theorem \cite{LaSalle, Lyapunov}. Hence, the GAS of $E_2^*$ is proved. The proof is completed.
\end{proof}
Because the GAS of the models \eqref{eq:2} and \eqref{eq:1a} is equivalent, we obtain the GAS of the original model \eqref{eq:1a}.
\begin{theorem}\label{theoremnew}
The endemic steady state $E^*$ of the model \eqref{eq:1a} is globally asymptotically stable provided that $\mathcal{R}_0 > 1$ and the condition \eqref{maincondition} is satisfied.
\end{theorem}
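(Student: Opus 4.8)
The plan is to deduce Theorem~\ref{theoremnew} as a direct consequence of Theorem~\ref{maintheorem2}, by carefully tracking the two model reductions that connect the full five-dimensional system \eqref{eq:1a} to the three-dimensional system \eqref{eq:2} and then invoking the invariance of global asymptotic stability under these reductions. First I would record that the last equation of \eqref{eq:1a}, namely $N' = A - \mu N$, is decoupled and has the globally attracting solution $N(t) \to N_* = A/\mu$. Hence \eqref{eq:1a} is asymptotically autonomous with limit system \eqref{eq:1b}, and by the theory of asymptotically autonomous systems one may legitimately reduce the long-time behaviour of \eqref{eq:1a} to that of \eqref{eq:1b} (equivalently \eqref{eq:1}); this is precisely the reduction carried out in \cite{HGuillen}, so the stability of $E^*$ for \eqref{eq:1a} is equivalent to the stability of the corresponding equilibrium for \eqref{eq:1}.

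Next I would observe that the substitution $M = \delta S + I$ used to pass from \eqref{eq:1} to \eqref{eq:2} is the linear change of variables $(S, I, R) \mapsto (M, I, R) = (\delta S + I,\, I,\, R)$, whose inverse $S = (M - I)/\delta$ exists because $0 < \delta < 1$. Being a linear isomorphism, this map is a diffeomorphism of $\mathbb{R}^3$ carrying the feasible set onto $\Omega^*$ and the equilibrium of \eqref{eq:1} corresponding to $E^*$ onto $E_2^* = (M_2^*, I_2^*, R_2^*)$, as one checks from \eqref{eq:4} using $M_2^* = \delta S^* + I^*$. Since global asymptotic stability is preserved under conjugation by a diffeomorphism, $E^*$ is globally asymptotically stable for \eqref{eq:1} if and only if $E_2^*$ is globally asymptotically stable for \eqref{eq:2}.

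With these two equivalences in hand the conclusion is immediate: assuming $\mathcal{R}_0 > 1$ (which guarantees the existence and uniqueness of $E^*$, hence of $E_2^*$) together with the condition \eqref{maincondition}, Theorem~\ref{maintheorem2} asserts that $E_2^*$ is globally asymptotically stable for \eqref{eq:2}. Chaining backwards through the coordinate change and then through the limiting-system reduction yields the global asymptotic stability of $E^*$ for the original model \eqref{eq:1a}, which is exactly the assertion of Theorem~\ref{theoremnew}.

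I expect the only genuinely delicate point to be the first reduction, from the five-dimensional system to the three-dimensional limit system. Linearity makes the coordinate change $M = \delta S + I$ entirely routine, and Theorem~\ref{maintheorem2} supplies the substantive Lyapunov and Volterra--Lyapunov content for free. The subtlety lies in rigorously transferring global asymptotic stability across the asymptotically autonomous reduction, since in general convergence of the limit system need not force convergence of the full system without an extra argument (for instance, excluding pathological behaviour at infinity). Here this is unproblematic because every trajectory remains in the compact positively invariant set $\Omega$, but it is the step where I would be most careful to cite the precise limiting-system result rather than treat the equivalence as purely formal.
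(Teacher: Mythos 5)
Your proposal is correct and follows essentially the same route as the paper, which deduces Theorem~\ref{theoremnew} from Theorem~\ref{maintheorem2} via the asserted equivalence between the GAS of \eqref{eq:1a} and of \eqref{eq:2}. In fact, your write-up is more careful than the paper's, which states this equivalence without proof; your explicit justification of the two reductions (the asymptotically autonomous limit-system step and the linear change of variables $M = \delta S + I$) supplies exactly the details the paper leaves implicit.
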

\begin{remark}
\begin{enumerate}[(i)]
\item It is easy to verify the condition \eqref{maincondition} since it is fully explicit. Moreover, \eqref{maincondition} is satisfied automatically when $\dfrac{b_C}{\delta} - b_C + b_I - \dfrac{v}{\delta} = 0$.
\item Since the matrix $Q$ given by \eqref{eq:8} is constant, the resulting matrix $D$ is also constant. Meanwhile,   the results in \cite{Liao} required that the matrices $D$ associated with the Volterra–Lyapunov stable matrices $Q$ must be constant. In general, it is difficult to verify this condition.
\item The representation of the derivative of the function $V$ given by \eqref{eq:8a} is not unique. There are $6$ ways to represent $dV/dt$ in the matrix form depending on the order of $M - M_2^*, I - I_2^*$ and $R - R_2^*$. For example, we can write
\begin{equation*}\label{eq:85}
\dfrac{dV}{dt} = X(DQ + QD^T)X^T,
\end{equation*}
where
\begin{equation*}
X = \big[M - M_2^* \quad I - I_2^*\quad R - R_2^*\big],\quad
Q = 
\begin{pmatrix}
-(v + \mu)&a&\dfrac{v - b_C}{\delta}\\
&&\\
v - b_I&-a&\dfrac{b_C}{\delta} - b_C + b_I - \dfrac{v}{\delta}\\
&&\\
\delta\epsilon&0&-(b_C + \epsilon + \mu)
\end{pmatrix},
\quad
D = 
\begin{pmatrix}
\tau_2&0&0\\
0&\tau_3&0\\
0&0&\tau_1
\end{pmatrix}
\end{equation*}
or
\begin{equation*}
X = \big[M - M_2^* \quad R - R_2^*\quad I - I_2^*\big], \quad
Q = 
\begin{pmatrix}
-(v + \mu)&\dfrac{v - b_C}{\delta}&a\\
&&\\
\delta\epsilon&-(b_C + \epsilon + \mu)&0\\
&&\\
v - b_I&\dfrac{b_C}{\delta} - b_C + b_I - \dfrac{v}{\delta}&-a
\end{pmatrix},
\quad
D = 
\begin{pmatrix}
\tau_2&0&0\\
0&\tau_1&0\\
0&0&\tau_3
\end{pmatrix}.
\end{equation*}
However, these representations are not convenience to determine conditions for the matrix $Q$ to be Lyapunov-Volterra stable.
\end{enumerate}
\end{remark}
\section{Numerical examples}\label{sec4}
In this section, some numerical examples, using the data in Table \ref{table1}, are performed to support the theoretical results.\par
We now solve numerically the differential equation model \eqref{eq:1a} by applying the classical fourth-order Runge-Kutta (RK4) method (see \cite{Ascher})  with $h = 10^{-5}$. The solutions of the model generated by the RK4 method for $t \in [0, \,\, 160]$ are depicted in Figures \ref{Fig:1} and \ref{Fig:2}.  In these figure, each blue curve represents a phase space corresponding to a specific initial data, the yellow arrows show the evolution of the model and the red circle indicates the location of the disease-endemic equilibrium point.\par
It is clear that in both cases the solutions are stable and converge to the equilibrium point $E^*$. In other words, $E^*$ is globally asymptotically stable. Also, the numerical results show that the conditions \eqref{eq:2a} and \eqref{eq:2b} are only technical ones.
\begin{landscape}
\begin{table}[H]
\begin{center}
\caption{The parameters used in numerical examples.}\label{table1}
\begin{tabular}{ccccccccccccccc}
\hline
Case&$A$&$\epsilon$&$a$&$v$&$\mu$&$\delta$&$b_I$&$b_C$&$\mathcal{R}_0$&$E_* = (S^*, I^*, R^*)$&Remark\\
\hline
$1$&2&0.004&0.008&0.05&0.01&0.9&0.1&0.005&2.8636&$(15.2796,\,\, 14.0548,\,\, 158.6942)$&The condition \eqref{eq:2b} is not satisfied\\
&&&&&&&&&&&The condition \eqref{maincondition} is satisfied\\
\hline
2&2&0.002&0.001&0.03&0.01&0.5&0.01&0.05&1.4286&$(39.7946,\,\,  17.1194,\,\, 137.2670)$&The condition \eqref{eq:2b} is not satisfied\\
&&&&&&&&&&&The condition \eqref{maincondition} is not satisfied\\
\hline
\end{tabular}
\end{center}
\end{table}
\end{landscape}
\begin{figure}[H]
\centering
\includegraphics[height=10.1cm,width=15cm]{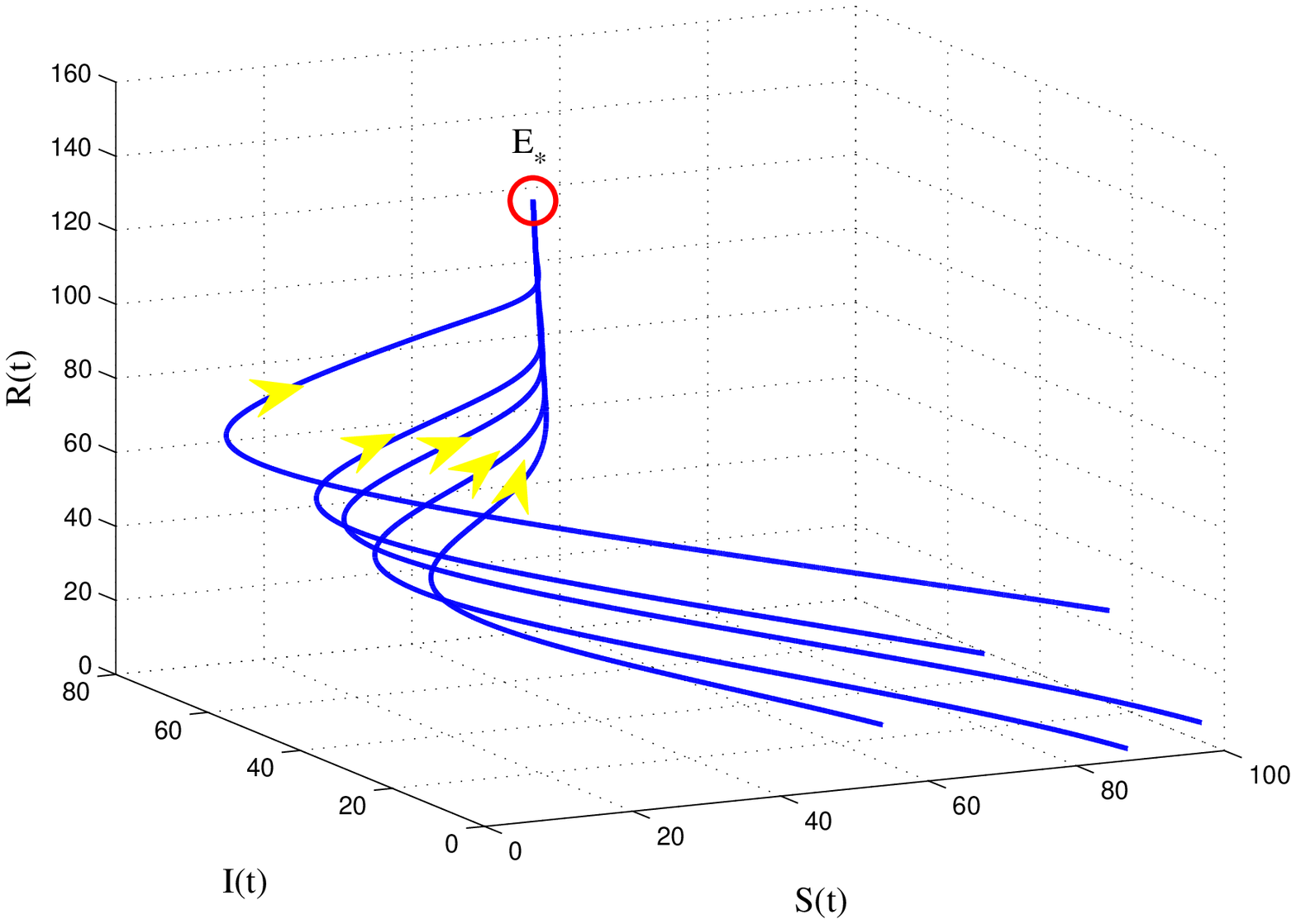}
\caption{The phase spaces for the model \eqref{eq:1a} in Case 1.}\label{Fig:1}
\end{figure}
\begin{figure}[H]
\centering
\includegraphics[height=10.1cm,width=15cm]{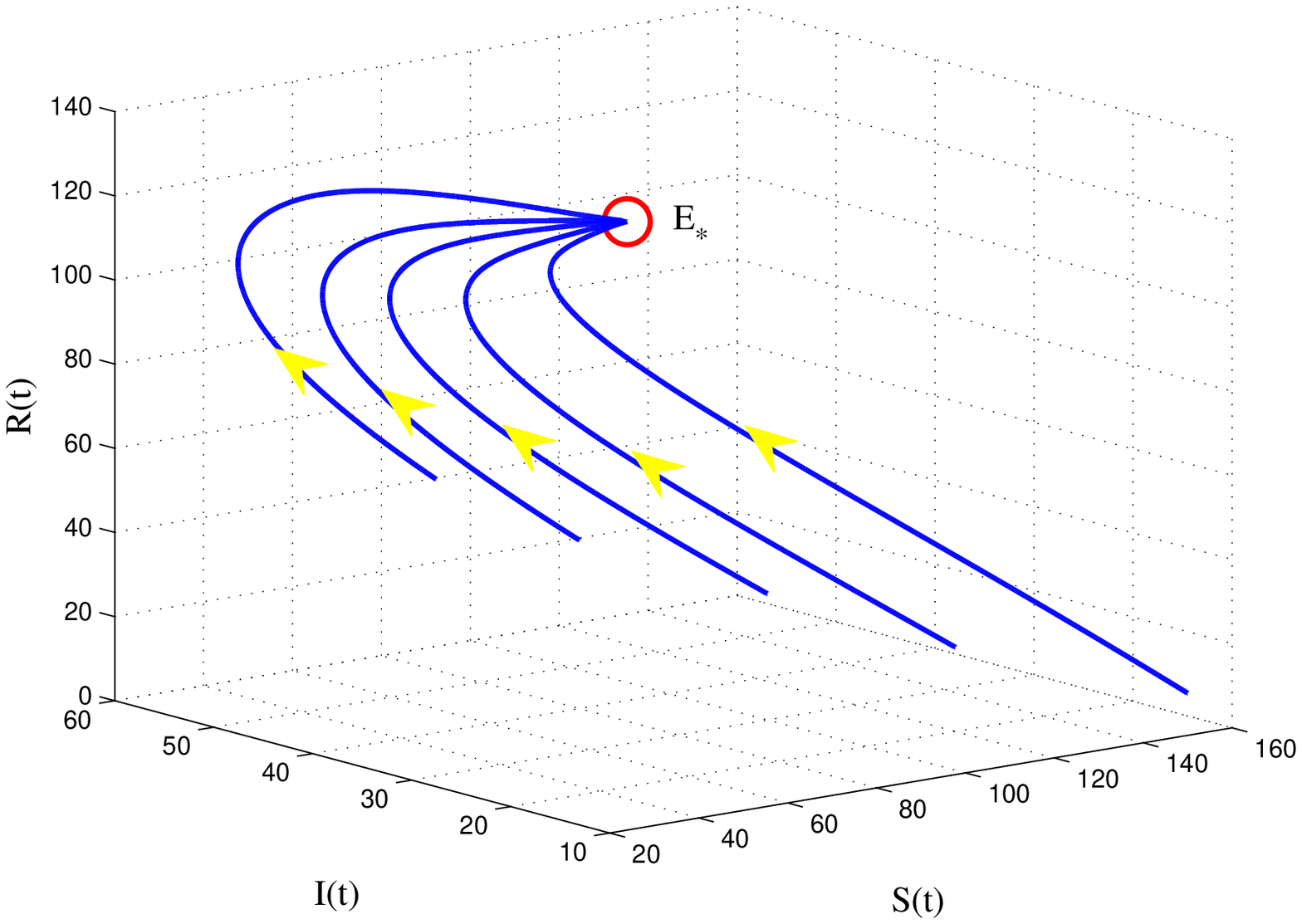}
\caption{The phase spaces for the model \eqref{eq:1a} in Case 2.}\label{Fig:2}
\end{figure}
\section{Conclusions and some open problems}\label{sec5}
In this work, we have used a simple approach to establish  the complete GAS of the SCIRS model \eqref{eq:1a} without the technical hypotheses proposed in \cite{HGuillen}.  This approach is  based on suitable Lyapunov functions in combination with characteristics of Lyapunov-Volterra stable real matrices. The main result is that we obtain a simple and easily-verified condition for the endemic equilibrium point to be globally asymptotically stable.  This result provides an important improvement for the results constructed in \cite{HGuillen}. The obtained theoretical findings have been supported and illustrated by a set of numerical simulations.\par
Recently, some researchers have applied the Mickens' methodology \cite{Mickens1, Mickens2, Mickens3, Mickens4} to formulate dynamically consistent nonstandard finite difference schemes for solving computer viruses and malware propagation models (see, for instance, \cite{Hoang1, Hoang2, Hoang3, MVaquero1, MVaquero2}). This is an important problem with many useful applications in real-world situations. Following these works, we can also apply the Mickens' methodology to propose nonstandard numerical schemes for the model \eqref{eq:1a}. Then, the stability problem for proposed nonstandard numerical schemes will be posted. This is not a simple problem because the equilibria of the continuous model are not only locally asymptotically stable but also globally asymptotically stable. However, the present approach can be considered as a promising solution for the problem.\par
In recent years, many researchers have constructed and developed the Lyapunov stability theory for fractional dynamical systems (see, for instance, \cite{Agarwal, Aguila-Camacho, Duarte-Mermoud, Li1, Li2, VargasDeLeon}). Hence, the present approach can be combined with the Lyapunov stability theory for fractional dynamical system to investigate asymptotic stability properties of the model \eqref{eq:1a} in the context of fractional derivatives such as the Caputo fractional derivative \cite{Caputo}, the Riemann-Liouville fractional derivative \cite{Diethelm, Podlubny}, the Caputo-Fabrizio fractional derivative \cite{Caputo1}, the new fractional derivative involving the normalized sinc function without singular kernel \cite{Yang}, the new fractional derivatives with non-local and non-singular kernel \cite{Atangana} and so on.\par
In the near future, we will study the use of the approach in the present work for resolving the GAS problem for dynamical systems governed by differential and difference equations. Also, the construction of dynamically consistent nonstandard finite difference schemes for the model \eqref{eq:1a} in particular and for computer viruses and malware propagation models in general will be considered.
\section*{References}

\end{document}